\newtheorem{thm}{Theorem}[section]
\newtheorem{prop}[thm]{Proposition}
\newtheorem*{tha}{Theorem A}
\newtheorem*{thb}{Theorem B}
\newtheorem*{thc}{Theorem C}
\theoremstyle{remark}
\newcommand{\C}{{\mathbb C}}
\newcommand{\D}{{\mathbb D}}
\newcommand{\R}{{\mathbb R}}
\newcommand{\T}{{\mathbb T}}
\renewcommand{\P}{{\mathcal P}}
\newcommand{\La}{\Lambda}
\newcommand{\la}{\lambda}
\newcommand{\f}{\frac}
\newcommand{\ov}{\overline}
\newcommand{\const}{\text{\rm const}}
\newcommand{\eps}{\varepsilon}
\newcommand{\al}{\alpha}
\newcommand{\ga}{\gamma}
\newcommand{\de}{\delta}
\renewcommand{\th}{\theta}
\newcommand{\ph}{\varphi}
\newcommand{\ze}{\zeta}
\newcommand{\om}{\omega}
\renewcommand{\sb}{\subset}
\numberwithin{equation}{section}
\begin{document}

\title{$ABC$-type estimates via Garsia-type norms}

\author{Konstantin M. Dyakonov}
\address{ICREA and Universitat de Barcelona, 
Departament de Matem\` atica Aplicada i An\` alisi, 
Gran Via 585, E-08007 Barcelona, Spain} 
\email{konstantin.dyakonov@icrea.es}
\keywords{Mason's theorem, $abc$ conjecture, Garsia-type norm, Lipschitz spaces, Blaschke products} 
\subjclass[2000]{30D50, 30D55, 11D41.} 
\thanks{Supported in part by grant MTM2008-05561-C02-01 from El Ministerio de Ciencia 
e Innovaci\'on (Spain) and grant 2009-SGR-1303 from AGAUR (Generalitat de Catalunya).}

\begin{abstract} We are concerned with extensions of the Mason--Stothers $abc$ theorem from 
polynomials to analytic functions on the unit disk $\D$. The new feature is that the number 
of zeros of a function $f$ in $\D$ gets replaced by the norm of the associated Blaschke 
product $B_f$ in a suitable smoothness space $X$. Such extensions are shown to exist, and the 
appropriate $abc$-type estimates are exhibited, provided that $X$ admits a \lq\lq Garsia-type norm", 
i.\,e., a norm sharing certain properties with the classical Garsia norm on $\text{\rm BMO}$. Special 
emphasis is placed on analytic Lipschitz spaces. 
\end{abstract}

\maketitle

\section{Introduction}

One of the famous (and notoriously difficult) open problems in number theory is the so-called 
{\it $abc$ conjecture} of Masser and Oesterl\`e. It states that to every $\eps>0$ there is 
a constant $K(\eps)$ with the following property: whenever $a$, $b$ and $c$ are relatively prime 
positive integers satisfying $a+b=c$, one has 
$$c\le K(\eps)\cdot\{\text{\rm rad}(abc)\}^{1+\eps}.$$ 
Here, $\text{\rm rad}(\cdot)$ stands for the {\it radical} of the integer in question, defined as 
the product of the distinct primes that divide it. (In other words, $\text{\rm rad}(m)$ 
is the greatest square-free divisor of $m$.) We refer to \cite{GT, L} for a discussion 
of the $abc$ conjecture and its potential applications. 
\par The conjecture was inspired by the following {\it $abc$ theorem} for polynomials. When 
stating it, we write $\deg{p}$ for the degree of a polynomial $p$ (in one complex variable) 
and $\widetilde N(p)$ for the number of its distinct zeros in $\C$. 

\begin{tha} Suppose $a$, $b$ and $c$ are polynomials, not all constants, having no common 
zeros and satisfying $a+b=c$. Then 
\begin{equation}\label{eqn:masonnn}
\max\{\deg{a},\,\deg{b},\,\deg{c}\}\le\widetilde N(abc)-1.
\end{equation}
\end{tha}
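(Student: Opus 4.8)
The plan is to run the classical Wronskian argument. Since $a+b=c$ also gives $a'+b'=c'$, a short computation shows that the polynomial $W:=ab'-a'b$ satisfies $W=ac'-a'c=-(bc'-b'c)$, so up to sign it is the Wronskian of any two of $a,b,c$. First I would check $W\not\equiv0$: if $ab'-a'b\equiv0$ then $(a/b)'=-W/b^2$ vanishes identically, so $a=\la b$ for some constant $\la$, hence $c=(1+\la)b$, and the hypothesis that $a,b,c$ have no common zero then forces $b$ (hence $a$ and $c$) to be a constant, contrary to assumption. I would also note that, because of the relation $a+b=c$, the condition that $a,b,c$ have no common zero is equivalent to their being pairwise coprime: a common zero of any two of them is automatically a zero of the third.

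The decisive step is a divisibility statement. Write $\operatorname{rad}p$ for the monic squarefree polynomial with the same zeros as $p$, so that $\deg(\operatorname{rad}p)=\widetilde N(p)$ and $p/\operatorname{rad}p$ is the product of the factors $(z-\ze)^{m-1}$ over the zeros $\ze$ of $p$, where $m$ is the multiplicity of $\ze$. I claim $p/\operatorname{rad}p\mid W$ for each $p\in\{a,b,c\}$. Take $p=a$ and let $\al$ be a zero of $a$ of multiplicity $m$. By coprimality $b(\al)\neq0$, so in $W=ab'-a'b$ the term $ab'$ vanishes at $\al$ to order $\geq m$ while $a'b$ vanishes to order exactly $m-1$; hence $(z-\al)^{m-1}\mid W$. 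Running over all zeros of $a$ gives $a/\operatorname{rad}a\mid W$, and the representations $W=ac'-a'c$ and $W=-(bc'-b'c)$ handle $c$ and $b$ in the same way. As $a,b,c$ are pairwise coprime, so are the three quotients, and hence their product divides $W$:
\[
\frac{abc}{(\operatorname{rad}a)(\operatorname{rad}b)(\operatorname{rad}c)}\ \Big|\ W .
\]

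Passing to degrees, and using $\widetilde N(abc)=\widetilde N(a)+\widetilde N(b)+\widetilde N(c)$ (pairwise coprimality once more), I obtain
\[
\deg a+\deg b+\deg c-\widetilde N(abc)\ \leq\ \deg W .
\]
Since $W=ab'-a'b\not\equiv0$ and both $ab'$ and $a'b$ have degree $\leq\deg a+\deg b-1$, we have $\deg W\leq\deg a+\deg b-1$; combined with the previous display, this gives $\deg c\leq\widetilde N(abc)-1$. Writing $W$ as $ac'-a'c$ and as $-(bc'-b'c)$ and repeating the estimate yields $\deg b\leq\widetilde N(abc)-1$ and $\deg a\leq\widetilde N(abc)-1$ respectively; taking the maximum of the three bounds gives \eqref{eqn:masonnn}.

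No real obstacle is anticipated. The one point demanding care is the local multiplicity count — checking that $a'b$ vanishes to order \emph{exactly} $m-1$ (and not more) at a zero of $a$, which is precisely where coprimality enters, together with the analogous computations at the zeros of $b$ and of $c$. One should also verify that the degenerate cases (one of the polynomials constant, or cancellation of leading terms in $W$) remain consistent with the stated inequalities — they do, the assumption that $a,b,c$ are not all constant being used only to guarantee $W\not\equiv0$.
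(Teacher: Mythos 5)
Your proof is correct: the non-vanishing of $W=ab'-a'b$, the identification $W=ac'-a'c=-(bc'-b'c)$, the local multiplicity count showing $(z-\al)^{m-1}\mid W$ at each zero of $a$ (and likewise for $b$, $c$), the coprimality bookkeeping, and the degree bound $\deg W\le\deg a+\deg b-1$ are all sound, and the degenerate cases are handled. This is, however, a genuinely different route from the one the paper takes. The paper does not prove Theorem A directly; it obtains \eqref{eqn:masonnn} as a limiting case of the disk inequality \eqref{eqn:harrr} (Theorem C with $n=1$), applied to the three polynomials rescaled to the disk $R\D$ and followed by letting $R\to\infty$, with the zero counts appearing as $L^1$-norms of derivatives of the associated Blaschke products. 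The underlying mechanism is the same in both arguments --- the Wronskian absorbs the excess multiplicities of the zeros, exactly your divisibility claim $abc/\bigl((\operatorname{rad}a)(\operatorname{rad}b)(\operatorname{rad}c)\bigr)\mid W$ --- but your version is self-contained, purely algebraic, and shorter, whereas the paper's detour through the disk is what permits the generalizations it is after: arbitrarily many summands, analytic (non-polynomial) functions, infinitely many zeros, and the replacement of zero counts by smoothness norms of Blaschke products.
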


\par This result is also known as Mason's theorem. It is indeed contained -- in a more general form -- in 
Mason's book \cite{M}, but the current version is essentially due to Stothers \cite{St}. The situation seems 
to be in full accordance with {\it V. I. Arnold's principle}: a personal name, when attached to a mathematical 
notion or statement, is never the name of the true discoverer. (Needless to say, Arnold's principle applies 
to itself as well.) 

\par Various approaches to Theorem A can be found in \cite{GT, GH, L, ShSm}. Let us also mention the 
following generalization involving any finite number of polynomials; see \cite{BM, DLoc, GH} for this 
and other related results. 

\begin{thb} Let $p_0,\dots,p_n$ be linearly independent polynomials and put $p_{n+1}=p_0+\dots+p_n$. 
Assume further that the zero-sets of $p_0,\dots,p_{n+1}$ are pairwise disjoint. Then 
\begin{equation}\label{eqn:abcxyz}
\max\{\deg{p_0},\dots,\deg{p_{n+1}}\}\le n\widetilde N(p_0p_1\dots p_{n+1})-\f{n(n+1)}2.
\end{equation}
\end{thb}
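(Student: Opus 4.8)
The plan is to bring in the \emph{Wronskian} $W:=W(p_0,\dots,p_n)=\det\bigl(p_j^{(i)}\bigr)_{0\le i,j\le n}$ and to run the argument familiar from the classical proof of Theorem A: a large \lq\lq power-like" divisor of $W$ accounts for the degrees of the $p_j$'s, while $\deg W$ itself is controlled by $\sum_j\deg p_j$ minus the \lq\lq derivative loss" $\tfrac{n(n+1)}2$. Since $p_0,\dots,p_n$ are linearly independent, $W\not\equiv0$. The first thing I would record is a symmetry of $W$: because $p_{n+1}=p_0+\cdots+p_n$, adding to the $j$-th column of the Wronskian matrix all the remaining columns turns it into the column $\bigl(p_{n+1},p_{n+1}',\dots,p_{n+1}^{(n)}\bigr)^{\mathrm T}$ without changing the determinant, so that $W(p_0,\dots,p_n)=\pm\,W(p_0,\dots,p_{j-1},p_{n+1},p_{j+1},\dots,p_n)$ for every $j\in\{0,\dots,n\}$. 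Thus $W$ treats all $n+2$ polynomials $p_0,\dots,p_{n+1}$ on an equal footing.

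Next comes the divisibility step. For $j=0,\dots,n+1$ put $q_j:=\gcd\bigl(p_j,p_j',\dots,p_j^{(n)}\bigr)$; if $\al$ is a zero of $p_j$ of multiplicity $m$, it is a zero of $q_j$ of multiplicity $\max(m-n,0)$, whence $\deg q_j\ge\deg p_j-n\,\widetilde N(p_j)$. Every entry of the $j$-th column of the (possibly reshuffled) Wronskian matrix from the previous step is divisible by $q_j$, so $q_j\mid W$ for each $j=0,\dots,n+1$. The hypothesis that the zero-sets of $p_0,\dots,p_{n+1}$ are pairwise disjoint makes $q_0,\dots,q_{n+1}$ pairwise coprime, so $\prod_{j=0}^{n+1}q_j\mid W$ and in particular $\sum_{j=0}^{n+1}\deg q_j\le\deg W$.

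It remains to estimate $\deg W$. In the $i$-th row of the matrix one has $\deg p_j^{(i)}\le\deg p_j-i$, so expanding the determinant gives $\deg W\le\sum_{j=0}^n\deg p_j-\sum_{i=0}^n i=\sum_{j=0}^n\deg p_j-\tfrac{n(n+1)}2$. Feeding this, the lower bounds for $\deg q_j$, and the identity $\widetilde N(p_0p_1\cdots p_{n+1})=\sum_{j=0}^{n+1}\widetilde N(p_j)$ (disjoint zero-sets once more) into $\sum\deg q_j\le\deg W$ yields
$$\sum_{j=0}^{n+1}\deg p_j-n\,\widetilde N(p_0\cdots p_{n+1})\le\sum_{j=0}^n\deg p_j-\f{n(n+1)}2,$$
i.e.\ $\deg p_{n+1}\le n\,\widetilde N(p_0\cdots p_{n+1})-\f{n(n+1)}2$. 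To obtain the same bound with $\deg p_{n+1}$ replaced by $\deg p_k$ for an arbitrary $k\le n$, I would apply what has just been proved to the $(n+1)$-tuple obtained from $p_0,\dots,p_n$ by deleting $p_k$, negating the rest, and adjoining $p_{n+1}$: it is again linearly independent, its sum is $p_k$, and it has the same radicals and the same pairwise-disjoint zero-sets. Taking the maximum over all indices then gives \eqref{eqn:abcxyz}.

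The construction is essentially mechanical once the Wronskian is in play; the point needing the most care is the divisibility bookkeeping — establishing $q_j\mid W$ \emph{simultaneously for all $j$} (which is exactly where the column-operation symmetry is indispensable), extracting the coprimality of the $q_j$ from the disjointness hypothesis, and balancing $\deg q_j\ge\deg p_j-n\widetilde N(p_j)$ against the $\tfrac{n(n+1)}2$ lost to differentiation. The mild relabelling-and-sign-change in the final step, which lets each $p_k$ in turn play the role of the sum, is the one place where a little alertness is required.
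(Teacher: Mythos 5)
Your argument is correct, but it is not the route the paper takes. The paper does not prove Theorem B directly at all: it is quoted as background (with references to Brownawell--Masser, Gundersen--Hayman and the author's \cite{DLoc}), and the derivation indicated in the text is to apply the analytic disk estimate of Theorem C to the polynomials rescaled to the disk $R\D$ and then let $R\to\infty$, so that the count $N_\D(\mathbf B)$ of Blaschke-product zeros turns into degrees and $\kappa,\mu\to$ the appropriate limits. What you give instead is the classical self-contained Wronskian proof: the column-operation symmetry $W(p_0,\dots,p_n)=\pm W(\dots,p_{n+1},\dots)$, the divisibility $q_j=\gcd(p_j,\dots,p_j^{(n)})\mid W$ with $\deg q_j\ge\deg p_j-n\widetilde N(p_j)$, pairwise coprimality of the $q_j$ from the disjointness hypothesis, and the bound $\deg W\le\sum_{j=0}^n\deg p_j-\tfrac{n(n+1)}2$; the relabelling trick at the end (deleting $p_k$, negating, adjoining $p_{n+1}$) correctly promotes the bound on $\deg p_{n+1}$ to the maximum over all indices. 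All the steps check out, including the linear independence of the modified tuple. The trade-off: your proof is purely algebraic, elementary and self-contained, whereas the paper's route exhibits Theorem B as a degenerate limiting case of a strictly more general analytic statement (Theorem C and, further, Theorem \ref{thm:mainresulttt}) in which degrees are replaced by zero counts or smoothness norms of Blaschke products on the disk --- which is the actual subject of the paper. It is worth noting that the same Wronskian mechanism you use (divisibility of $W\mathcal B^n$ by $\mathbf B$) reappears in the proof of Theorem \ref{thm:mainresulttt}, so the two arguments are cousins rather than strangers.
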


\par Quite recently, in \cite{DCRM, DLoc}, we came up with some $abc$-type estimates that make sense in a much 
more general setting. Namely, we were concerned with analytic functions on a (reasonably decent) planar domain, 
rather than just polynomials on $\C$. In fact, \cite{DCRM} dealt with the case of the disk only, while 
the functions were assumed to be analytic in a neighborhood of its closure. Retaining these hypotheses, 
we now go on to describe part of what we did in \cite{DCRM}. 

\par Write $\D$ for the unit disk $\{z\in\C:|z|<1\}$ and $\T$ for its boundary, $\partial\D$. Suppose 
$f_0,\dots,f_n$ are functions that are analytic on the (closed) disk $\D\cup\T$, and set 
\begin{equation}\label{eqn:summm}
f_{n+1}=f_0+\dots+f_n. 
\end{equation}
For each $j=0,\dots,n+1$, we associate with $f_j$ the (finite) Blaschke product $B_j$ built from 
the function's zeros. This means that $B_j$ is given by 
\begin{equation}\label{eqn:blaschkeee}
z\mapsto\prod_{k=1}^s\left(\f{z-a_k}{1-{\ov a_k}z}\right)^{m_k},
\end{equation}
where $a_k=a_k^{(j)}$ ($1\le k\le s=s_j$) are the distinct zeros of $f_j$ in $\D$, and $m_k=m_k^{(j)}$ 
are their respective multiplicities. Further, we let $\mathbf B$ denote the {\it least common multiple} 
(defined in the natural way) of the Blaschke products $B_0,\dots,B_{n+1}$, to be written as 
$$\mathbf B:=\text{\rm{LCM}}(B_0,\dots,B_{n+1}),$$
and we put 
$$\mathcal B:=\text{\rm{rad}}(B_0B_1\dots B_{n+1}).$$
In the latter formula, we use the notation $\text{\rm{rad}}(B)$ for the {\it radical} 
of a Blaschke product $B$. This is, by definition, the Blaschke product 
that arises when the zeros of $B$ are all converted into simple ones. 
In other words, given a Blaschke product of the form \eqref{eqn:blaschkeee}, its radical 
is obtained by replacing each $m_k$ with $1$. 
\par Finally, let $W=W(f_0,\dots,f_n)$ be the {\it Wronskian} of the (analytic) 
functions $f_0,\dots,f_n$, so that 
\begin{equation}\label{eqn:wronskiannn}
W:=
\begin{vmatrix}
f_0&f_1&\dots&f_n\\
f'_0&f'_1&\dots&f'_n\\
\dots&\dots&\dots&\dots\\
f_0^{(n)}&f_1^{(n)}&\dots&f_n^{(n)}
\end{vmatrix}.
\end{equation}
We then introduce the quantities 
$$\kappa=\kappa(W):=\|W'\|_1\|1/W\|_\infty$$ 
and 
$$\mu=\mu(W):=\|W\|_\infty\|1/W\|_\infty.$$ 
(Here and below, $\|\cdot\|_p$ stands for $\|\cdot\|_{L^p(\T)}$, the $L^p$-norm 
with respect to the {\it normalized} arclength measure on the circle.) 
The two quantities are finite, provided that $W$ has no zeros on $\T$. 
\par Now we are in a position to state the following result from \cite{DCRM}. 

\begin{thc} Suppose $f_j$ ($j=0,1,\dots,n+1$) are analytic functions 
on $\D\cup\T$, related by \eqref{eqn:summm} and such that 
the Wronskian \eqref{eqn:wronskiannn} vanishes nowhere on $\T$. 
Once $\mathbf B$, $\mathcal B$, $\kappa$ and $\mu$ are defined as above, we have 
\begin{equation}\label{eqn:harrr}
N_\D(\mathbf B)\le\kappa+n\mu N_\D(\mathcal B),
\end{equation}
where $N_\D(\cdot)$ denotes the number of the function's zeros in $\D$, counting multiplicities. 
\end{thc}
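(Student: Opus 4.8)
The plan is to sandwich the number of zeros of the Wronskian $W$ in $\D$: from below by a \emph{local} divisibility-type computation at the zeros of the $f_j$, and from above by the elementary estimate that makes $\kappa$ appear. We may assume $W\not\equiv0$, since otherwise $\|1/W\|_\infty=\infty$, so $\kappa=\infty$ and \eqref{eqn:harrr} is vacuous; then $W$ is analytic on $\D\cup\T$ and zero-free on $\T$, so $\|1/W\|_\infty<\infty$.

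\emph{Step 1: a local estimate.} Fix $a\in\D$ that is a zero of at least one of $f_0,\dots,f_{n+1}$, and let $m_a^{(j)}$ be the order of the zero of $f_j$ at $a$ (taken to be $0$ if $f_j(a)\ne0$). I claim that $\operatorname{ord}_aW\ge\bigl(\max_{0\le j\le n+1}m_a^{(j)}-n\bigr)^+$, where $(x)^+=\max\{x,0\}$. For an index $j\le n$ this is immediate from the Leibniz expansion of the determinant \eqref{eqn:wronskiannn}: every term contains exactly one factor $f_j^{(i)}$ with $0\le i\le n$, and $f_j^{(i)}$ vanishes at $a$ to order $(m_a^{(j)}-i)^+\ge(m_a^{(j)}-n)^+$, so each term, and hence $W$, vanishes at $a$ to order at least $(m_a^{(j)}-n)^+$. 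For $j=n+1$, note that adding all the other columns to the $i$-th column (for any fixed $i\le n$) turns that column of \eqref{eqn:wronskiannn} into $(f_{n+1},f_{n+1}',\dots,f_{n+1}^{(n)})$ by \eqref{eqn:summm}, without changing the determinant; applying the previous reasoning to this column gives the claim for $j=n+1$.

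\emph{Step 2: summation.} The common underlying point set $\{a_1,\dots,a_q\}$ of the zeros of $f_0\cdots f_{n+1}$ in $\D$ has cardinality $q=N_\D(\mathcal B)$, since $\mathcal B=\operatorname{rad}(B_0\cdots B_{n+1})$ has a simple zero at each $a_l$; and $N_\D(\mathbf B)=\sum_{l=1}^q\max_{0\le j\le n+1}m_{a_l}^{(j)}$, since $\mathbf B=\operatorname{LCM}(B_0,\dots,B_{n+1})$ has a zero of that order at $a_l$. Summing Step 1 over $l$ and dropping the positive part,
$$
N_\D(W)\ \ge\ \sum_{l=1}^q\operatorname{ord}_{a_l}W\ \ge\ \sum_{l=1}^q\Bigl(\max_{0\le j\le n+1}m_{a_l}^{(j)}-n\Bigr)\ =\ N_\D(\mathbf B)-nN_\D(\mathcal B).
$$
\emph{Step 3: the upper bound.} Since $W$ is analytic on $\D\cup\T$ and has no zeros on $\T$, the argument principle together with the parametrization $z=e^{i\theta}$ yields
$$
N_\D(W)=\Bigl|\frac1{2\pi}\int_0^{2\pi}\frac{W'(e^{i\theta})}{W(e^{i\theta})}\,e^{i\theta}\,d\theta\Bigr|\ \le\ \Bigl\|\frac{W'}{W}\Bigr\|_1\ \le\ \|W'\|_1\,\|1/W\|_\infty=\kappa.
$$
Combining this with Step 2 gives $N_\D(\mathbf B)\le\kappa+nN_\D(\mathcal B)$, and since $\mu=\|W\|_\infty\|1/W\|_\infty\ge\|W\cdot(1/W)\|_\infty=1$, this is in fact slightly stronger than \eqref{eqn:harrr}.

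I expect Step 1 to be the only point requiring care — the precise order of vanishing of a Wronskian at a point where one of its entries has a high-order zero, combined with the column manipulation that brings $f_{n+1}$ into play via \eqref{eqn:summm}. Steps 2 and 3 are routine bookkeeping with the LCM and the radical, and a one-line application of the argument principle; the presence of $\mu$ in \eqref{eqn:harrr} appears to be a concession to the more general statement proved later (with $N_\D(\mathbf B)$ and $N_\D(\mathcal B)$ replaced by norms in a Garsia-type space), rather than a genuine necessity here.
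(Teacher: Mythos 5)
Your proof is correct, and it actually reaches a slightly sharper conclusion than \eqref{eqn:harrr}. Your Step 1 is precisely the paper's key local lemma: in the proof of Theorem \ref{thm:mainresulttt} the same column expansion (with the same column manipulation for $j=n+1$ via \eqref{eqn:summm}) shows that $W$ vanishes to order at least $k-n$ wherever $\mathbf B$ vanishes to order $k$, i.e., that $W\mathcal B^n=F\mathbf B$ with $F$ analytic. Where you diverge is in how this divisibility is exploited. The paper stays with the identity $W\mathcal B^n=F\mathbf B$ and estimates a norm of both sides --- a Garsia-type norm in Theorem \ref{thm:mainresulttt}, or, for Theorem C itself (proved in \cite{DCRM}), the Hardy--Sobolev norm $\|(\cdot)'\|_1$ via $N_\D(B)=\|B'\|_1$ for Blaschke products. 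The factor $\mu=\|W\|_\infty\|1/W\|_\infty$ then enters unavoidably: bounding the norm of the product $W\mathcal B^n$ from above costs a factor $\|W\|_\infty$ (the product rule, cf.\ \eqref{eqn:wroaboveee}), and bounding it from below costs $\|1/W\|_\infty^{-1}$. You instead count zeros directly: $N_\D$ is additive on products, so the lower bound $N_\D(W)\ge N_\D(\mathbf B)-nN_\D(\mathcal B)$ comes for free from Step 1, and the argument principle plus H\"older gives $N_\D(W)\le\|W'/W\|_1\le\kappa$. This is more elementary and confirms your closing remark: $\mu$ is an artifact of the norm-based formulation (where multiplicativity fails and \eqref{eqn:abcxxx}, \eqref{eqn:estgar} are the targets), not of the zero count; since $\mu\ge1$, your bound $N_\D(\mathbf B)\le\kappa+nN_\D(\mathcal B)$ implies \eqref{eqn:harrr}. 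One cosmetic point: the hypothesis that $W$ is zero-free on $\T$ already forces $W\not\equiv0$, so your opening reduction is superfluous.
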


\par It was explained in \cite{DCRM} how to derive the original $abc$ inequality \eqref{eqn:masonnn} 
from \eqref{eqn:harrr}. Basically, the idea is to apply Theorem C, with $n=1$, to the three polynomials, 
rescaling everything for the disk $R\D=\{z:|z|<R\}$, and then pass to the limit as $R\to\infty$. 
In the case of an arbitrary $n$, we similarly deduce Theorem B from Theorem C; see \cite{DLoc} for 
details. 

\par Also, in \cite{DCRM}, inequality \eqref{eqn:harrr} was supplemented with a certain alternative 
estimate, which we do not cite here. Further developments, as contained in \cite{DLoc}, included the 
situation where the functions $f_j$ are merely analytic on $\D$ and suitably smooth up to $\T$ (but 
not necessarily analytic on $\D\cup\T$); in particular, the case of infinitely many zeros was dealt with. 
In addition, other -- fairly general -- domains were considered in place of the disk. 

\par In this note, we extend \eqref{eqn:harrr} in yet another direction. Let us observe that, for 
a Blaschke product $B$, the number of its zeros $N_\D(B)$ coincides with the quantity $\|B'\|_1$; 
moreover, a similar quantity $\|W'\|_1$ appears in the definition of the coefficient $\kappa$ 
above.  
Therefore, \eqref{eqn:harrr} reflects a certain fact about the Hardy--Sobolev space 
$H^1_1:=\{f\in H^1:\,f'\in H^1\}$, equipped with the norm $\|f\|_{H^1_1}:=\|f'\|_1$. 
We may ask, then, what other \lq\lq smooth" analytic spaces $X$ admit 
(under the hypotheses of Theorem C) the $abc$-type estimate 
\begin{equation}\label{eqn:abcxxx}
c\|\mathbf B\|_X\le\kappa_X+n\mu\|\mathcal B\|_X,
\end{equation}
with $\kappa_X:=\|W\|_X\|1/W\|_\infty$ and with some constant $c=c_X>0$. While for $X=H^1_1$ 
we have $c=1$, it seems reasonable to allow for an unspecified factor $c$ in the general case; 
this $c$ should depend neither on $n$ nor on the functions involved. 

\par Our main result, to be stated in Section 3, will provide us with a collection of spaces 
$X$ that enjoy the required property. Roughly speaking, the \lq\lq smoothest" of these is the 
space $H^\infty_1:=\{f\in H^\infty:\,f'\in H^\infty\}$ (i.e., the class of analytic functions 
satisfying the Lipschitz 1-condition) with the natural norm $\|f\|_{H^\infty_1}:=\|f'\|_\infty$. 
Indeed, while $H^\infty_1$ does admit the $abc$-type estimate \eqref{eqn:abcxxx}, 
no further increase of smoothness (e.g., in the sense of passing to higher order Lipschitz 
spaces) is possible. We first state the positive part of that endpoint result as follows. 

\begin{prop}\label{thm:prop1} Under the hypotheses of Theorem C, one has 
\begin{equation}\label{eqn:lipone}
c\|\mathbf B'\|_\infty\le\|1/W\|_\infty\cdot\left(\|W'\|_\infty
+n\|W\|_\infty\|\mathcal B'\|_\infty\right),
\end{equation}
where $c>0$ is an absolute constant. 
\end{prop}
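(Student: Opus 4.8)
The plan is to follow the scheme behind Theorem C, but to carry the $L^\infty(\T)$-norm wherever that argument used the $L^1(\T)$-norm, and to absorb the resulting loss into the absolute constant $c$ and into the factor $\mu=\|W\|_\infty\|1/W\|_\infty$, which is always $\ge1$ by the maximum principle. First I would recover the divisibility relation that is the algebraic core of Theorem C. Let $B_W$ be the finite Blaschke product built from the zeros of $W$ in $\D$; since $W$ is analytic across $\T$ and nonvanishing there, $B_W$ is precisely the inner factor of $W$, and $\operatorname{ord}_aB_W=\operatorname{ord}_aW$ for every $a\in\D$. The claim is that $\mathbf B$ divides $B_W\mathcal B^n$ in the semigroup of finite Blaschke products, i.e.\ $\operatorname{ord}_a\mathbf B\le\operatorname{ord}_aW+n\operatorname{ord}_a\mathcal B$ for all $a\in\D$. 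Indeed, if $\operatorname{ord}_a\mathbf B=m\ge1$ then some $f_j$ ($j\in\{0,\dots,n+1\}$) vanishes to order $m$ at $a$; using $f_{n+1}=f_0+\dots+f_n$ and column operations, $W$ equals, up to sign, the Wronskian of some $n+1$ of the functions including that one, whose column is divisible by $(z-a)^{m-n}$, so $\operatorname{ord}_aW\ge m-n$; since $\operatorname{ord}_a\mathcal B=1$ this gives $\operatorname{ord}_a\mathbf B\le\operatorname{ord}_aW+n=\operatorname{ord}_aW+n\operatorname{ord}_a\mathcal B$ (the cases $m<n$ and $m=0$ being immediate). Write $\mathbf B\,Q=B_W\mathcal B^n$ with $Q$ a finite Blaschke product.

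Next I would convert this factorization into a pointwise inequality on $\T$. For any finite Blaschke product $B=\prod_k\bigl(\f{z-a_k}{1-\ov{a_k}z}\bigr)^{m_k}$ one has on $\T$
\[
z\,\f{B'(z)}{B(z)}=\sum_km_k\,\f{1-|a_k|^2}{|z-a_k|^2}=|B'(z)|\ge0 .
\]
Taking logarithmic derivatives in $\mathbf B\,Q=B_W\mathcal B^n$ and multiplying by $z$ gives, on $\T$,
\[
|\mathbf B'|+|Q'|=|B_W'|+n\,|\mathcal B'| ,
\]
so $|\mathbf B'|\le|B_W'|+n|\mathcal B'|$ pointwise on $\T$, and therefore $\|\mathbf B'\|_\infty\le\|B_W'\|_\infty+n\|\mathcal B'\|_\infty$.

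The heart of the matter is then the \emph{extraction estimate}
\[
\|B_W'\|_\infty\le C\,\|W'\|_\infty\,\|1/W\|_\infty
\]
with an absolute constant $C$. This is the $H^\infty_1$-counterpart of the elementary bound $N_\D(W)=\|B_W'\|_1\le\|W'/W\|_1\le\|W'\|_1\|1/W\|_\infty$ used for Theorem C, the essential difference being that the passage from $W'/W$ to $B_W'$ can no longer be carried out by averaging. I would prove it from the inner--outer factorization $W=B_WW_{\mathrm{out}}$ (up to a unimodular constant): on $\T$ one has $|W_{\mathrm{out}}|=|W|$, hence $1/W_{\mathrm{out}}\in H^\infty$ with $\|1/W_{\mathrm{out}}\|_\infty=\|1/W\|_\infty$, and $B_W=W\cdot(1/W_{\mathrm{out}})$, so the task is to bound the $H^\infty_1$-norm of this product by $\|W\|_{H^\infty_1}\|1/W_{\mathrm{out}}\|_\infty$. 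This is exactly where the ``Garsia-type'' character of $\|\cdot\|_{H^\infty_1}$ is needed: writing $\|f'\|_\infty=\sup_{z\in\D}|f'(z)|$ and observing that each functional $\rho_z(f):=|f'(z)|$ obeys the Leibniz inequality $\rho_z(fg)\le\|f\|_\infty\rho_z(g)+\|g\|_\infty\rho_z(f)$, one is reduced to controlling $\rho_z(B_W)$ uniformly, using $B_W=W/W_{\mathrm{out}}$ together with the fact that, by finiteness of $\|W'\|_\infty\|1/W\|_\infty$, the zeros of $W$ stay away from $\T$: $1-|b|\ge\bigl(\|W'\|_\infty\|1/W\|_\infty\bigr)^{-1}$, since $|W(z)|\le\|W'\|_\infty\dist\bigl(z,\{\text{zeros of }W\}\bigr)$ and $|W|\ge\|1/W\|_\infty^{-1}$ on $\T$. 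I expect this step to be the main obstacle. The naive route — estimating $|B_W'|\le|W'|/|W|+|W_{\mathrm{out}}'|/|W|$ on $\T$ — reduces the problem to bounding $\|W_{\mathrm{out}}'\|_\infty$, and every a priori estimate for that quantity I can see is circular (it re-introduces $\|B_W'\|_\infty$ on the right with coefficient $\mu\ge1$, which cannot be absorbed); breaking this circularity is precisely the role of the Garsia-type apparatus, and in the present endpoint case it can alternatively be effected by a direct summation over the zeros exploiting the positivity of $zB'/B$ on $\T$.

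Finally, combining the last two displays,
\[
\|\mathbf B'\|_\infty\le C\,\|W'\|_\infty\,\|1/W\|_\infty+n\|\mathcal B'\|_\infty\le C\,\|1/W\|_\infty\bigl(\|W'\|_\infty+n\|W\|_\infty\|\mathcal B'\|_\infty\bigr),
\]
where in the last step I used $\|W\|_\infty\|1/W\|_\infty\ge1$. This is the asserted inequality, with $c=1/C$.
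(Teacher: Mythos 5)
Your reduction is correct and, up to the crucial step, runs parallel to the paper's argument: the divisibility $\mathbf B\mid B_W\mathcal B^n$ is exactly the relation $W\mathcal B^n=F\mathbf B$ established at the start of the proof of Theorem \ref{thm:mainresulttt}, and your pointwise identity $|\mathbf B'|+|Q'|=|B_W'|+n|\mathcal B'|$ on $\T$ is a clean elementary substitute for the paper's use of the two-sided estimate \eqref{eqn:asympforbis}. The genuine gap is the \emph{extraction estimate} $\|B_W'\|_\infty\le C\,\|W'\|_\infty\|1/W\|_\infty$, which you state, correctly identify as the crux, and then do not prove. Neither of the escape routes you gesture at is carried out, and the second one fails as described: from $|W|\ge\|1/W\|_\infty^{-1}$ on $\T$ and $|W(\ze)|\le\|W'\|_\infty\,|\ze-a_k|$ you do get $1-|a_k|\ge\left(\|W'\|_\infty\|1/W\|_\infty\right)^{-1}$ for every zero $a_k$ of $W$, but summing the resulting bounds $m_k(1-|a_k|^2)/|\ze-a_k|^2\le 2m_k/(1-|a_k|)$ over the zeros produces an extra factor of $N_\D(W)$, which is not controlled by the right-hand side of \eqref{eqn:lipone}. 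So "direct summation over the zeros exploiting the positivity of $zB'/B$" does not close the argument.

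The paper breaks exactly the circularity you describe by invoking the Garsia-type norm $\tilde{\mathcal N}_1$ on $H^\infty_1$ constructed in \cite{DAdv} (with $p=1$, $k(z)=1-|z|$ and the specific $\Psi$ recorded in Section 2). With respect to that norm, the left inequality in \eqref{eqn:asympforbis} applied to $W=\mathcal O_{|W|}B_W$ gives
$$\sup_{z\in\D}\f{\bigl|\mathcal O_{|W|}(z)\bigr|\,\bigl(1-|B_W(z)|\bigr)}{1-|z|}\le\tilde{\mathcal N}_1(W),
\qquad\text{hence}\qquad
\sup_{z\in\D}\f{1-|B_W(z)|}{1-|z|}\le\|1/W\|_\infty\,\tilde{\mathcal N}_1(W),$$
and the last supremum is comparable to $\|B_W'\|_\infty$ for inner functions (Schwarz--Pick in one direction, radial integration in the other). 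The extraction estimate then follows from the equivalence $\tilde{\mathcal N}_1(\cdot)\asymp\|\cdot\|_{H^\infty_1}$, which is a nontrivial theorem of \cite{DAdv} and not something obtainable by Leibniz-type manipulation of $W=B_WW_{\mathrm{out}}$ on $\T$. In short, your proposal is a correct skeleton with the load-bearing analytic input missing: to complete it you must either import the GTN result for $H^\infty_1$ --- at which point you may as well apply Theorem \ref{thm:mainresulttt} directly, as the paper does --- or supply an independent proof of the extraction estimate, which you have not done.
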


\par This means that the $abc$-type inequality 
\begin{equation}\label{eqn:genver}
c\|\mathbf B\|_X\le\|1/W\|_\infty\cdot\left(\|W\|_X+n\|W\|_\infty\|\mathcal B\|_X\right), 
\end{equation}
or equivalently \eqref{eqn:abcxxx}, holds with $X=H^\infty_1$. 
Moreover, we shall see that there is a certain \lq\lq privileged" norm on $H^\infty_1$ which, 
when used in place of $\|f'\|_\infty$ above, makes the corresponding statement true with $c=1$. 

\par It is precisely the implementation of a special norm that is crucial to our approach. 
Our method also applies to the analytic Lipschitz spaces $A_\om$ associated 
with certain slower majorants $\om$, not just to $H^\infty_1$ (in which 
case the majorant is $\om_1(t):=t$). Here, by saying that $\om$ is a {\it majorant} 
we mean that $\om:\R_+\to\R_+$ is an increasing continuous 
function on $\R_+:=(0,\infty)$ with $\lim_{t\to0^+}\om(t)=0$ such that 
$\om(t)/t$ is nonincreasing for $t>0$. The space $\La_\om(E)$ on a set $E\subset\C$ is then 
formed by the functions $f:E\to\C$ satisfying 
$$\|f\|_{\La_\om(E)}:=\sup\left\{\f{|f(z_1)-f(z_2)|}{\om(|z_1-z_2|)}:\,z_1,z_2\in E,
\,z_1\ne z_2\right\}<\infty.$$ 
When $\om$ is of the form $\om_\al(t):=t^\al$, with $0<\al\le1$, we write $\La^\al(E)$ 
rather than $\La_{\om_\al}(E)$. Further, we define 
the analytic Lipschitz space $A_\om$ to be $H^\infty\cap\La_\om(\D)$ 
and endow it with the norm $\|\cdot\|_{\La_\om}:=\|\cdot\|_{\La_\om(\D)}$. When 
$\om=\om_\al$, the corresponding $A_\om$-space is denoted by $A^\al$; thus, in particular, 
$H^\infty_1=A^1$ (with equality of norms). 

\par Finally, we recall that a majorant $\om$ is said to be {\it regular} if 
$$\int_0^\de\f{\om(t)}tdt+\de\int_\de^\infty\f{\om(t)}{t^2}dt\le C\om(\de),\qquad0<\de<2,$$ 
for some fixed $C=C_\om>0$. The basic examples of regular majorants are the $\om_\al$'s 
with $0<\al<1$. 

\begin{prop}\label{thm:prop2} For every regular majorant $\om$ one has, 
under the hypotheses of Theorem C, 
\begin{equation}\label{eqn:lipreg}
c_\om\|\mathbf B\|_{\La_\om}\le\|1/W\|_\infty\cdot\left(\|W\|_{\La_\om}
+n\|W\|_\infty\|\mathcal B\|_{\La_\om}\right),
\end{equation}
where $c_\om>0$ is a constant depending only on $\om$. 
\end{prop}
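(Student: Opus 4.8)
The plan is to deduce \eqref{eqn:lipreg} from the general result of Section~3, which establishes \eqref{eqn:genver} for every analytic function space $X$ carrying a \lq\lq Garsia-type norm''; the task then reduces to exhibiting such a norm on $A_\om$ for an arbitrary regular majorant $\om$. Concretely, one wants a norm $\|\cdot\|_*$ on $A_\om$ with the four structural properties that make the classical Garsia norm $N$ on $\mathrm{BMOA}$ work: \textbf{(i)} $\|\cdot\|_*\asymp\|\cdot\|_{\La_\om}$, with constants depending only on $\om$; \textbf{(ii)} $\|h/I\|_*\le\|h\|_*$ whenever $I$ is inner and $h/I\in H^\infty$; \textbf{(iii)} a Leibniz inequality $\|gh\|_*\le\|g\|_*\|h\|_\infty+\|g\|_\infty\|h\|_*$; and \textbf{(iv)} the \lq\lq Garsia estimate'' $\|I_h\|_*\le\|1/h\|_\infty\,\|h\|_*$ for every $h\in A_\om$ with no zeros on $\T$, where $I_h$ is the inner factor of $h$ (a finite Blaschke product, since such an $h$ has no singular inner part -- itself a consequence of the regularity of $\om$) and $\|1/h\|_\infty$ abbreviates $\|1/h\|_{L^\infty(\T)}$. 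For $\om\equiv1$ all four hold for $N$, property (iv) being the known bound $N(h)\ge\|1/h\|_\infty^{-1}$, obtained by evaluating the Garsia functional of $h$ at a zero of $I_h$; and for $X=H^1_1$ the role of $\|\cdot\|_*$ is played by $\|f\|_{H^1_1}=\|f'\|_1$ itself, with (iv) becoming the contour-integral estimate $N_\D(W)\le\|1/W\|_\infty\|W'\|_1$ that underlies Theorem~C.

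The analytic input, implicit already in \cite{DCRM}, is a divisibility relation: $\mathbf B$ divides $B_W\,\mathcal B^{\,n}$ in $H^\infty(\D)$, where $B_W$ is the Blaschke part of $W$. Indeed, fix $a\in\D$ and put $m:=\operatorname{ord}_a\mathbf B=\max_{0\le j\le n+1}\operatorname{ord}_af_j$; we may assume $m>n$, since otherwise $\operatorname{ord}_a(B_W\mathcal B^{\,n})\ge n\ge m$ already. If the maximum $m$ is attained by some $f_j$ with $j\le n$, then the $j$-th column $(f_j,f_j',\dots,f_j^{(n)})^T$ of the Wronskian matrix of $f_0,\dots,f_n$ vanishes at $a$ to order $\ge m-n$; if it is attained only by $f_{n+1}$, then the sum of all $n+1$ columns of that matrix equals $(f_{n+1},f_{n+1}',\dots,f_{n+1}^{(n)})^T$ by \eqref{eqn:summm} and hence vanishes at $a$ to order $\ge m-n$, and replacing one column by this sum is a column operation. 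Either way $\operatorname{ord}_aW\ge m-n$. Since $\mathcal B$ has a simple zero at each zero of $\mathbf B$, this gives $\operatorname{ord}_a(B_W\mathcal B^{\,n})=\operatorname{ord}_aW+n\ge m=\operatorname{ord}_a\mathbf B$ at every zero of $\mathbf B$, so $R:=B_W\mathcal B^{\,n}/\mathbf B$ is a finite Blaschke product, and
$$\mathbf B=\frac{B_W\,\mathcal B^{\,n}}{R},\qquad R\ \text{inner},\quad R\mid B_W\mathcal B^{\,n}.$$

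With (i)--(iv) in hand the proof closes in a few lines. By (ii), $\|\mathbf B\|_*\le\|B_W\mathcal B^{\,n}\|_*$. Since $\|B_W\|_\infty=\|\mathcal B\|_\infty=1$, iterating (iii) yields $\|B_W\mathcal B^{\,n}\|_*\le\|B_W\|_*+n\|\mathcal B\|_*$; applying (iv) to $h=W$, whose inner factor is $B_W$, gives $\|B_W\|_*\le\|1/W\|_\infty\|W\|_*$; and $\|1/W\|_\infty\|W\|_\infty\ge1$, so $n\|\mathcal B\|_*\le n\|1/W\|_\infty\|W\|_\infty\|\mathcal B\|_*$. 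Altogether
$$\|\mathbf B\|_*\le\|1/W\|_\infty\left(\|W\|_*+n\|W\|_\infty\|\mathcal B\|_*\right),$$
and running this through the equivalence (i) produces \eqref{eqn:lipreg} with $c_\om$ equal to the reciprocal of the product of the two equivalence constants in (i). Thus everything rests on the construction of $\|\cdot\|_*$ and the verification of (i)--(iv). The main obstacles -- to be settled in Section~3 -- are (i), a Hardy--Littlewood/Campanato-type equivalence that genuinely requires $\om$ to be regular and fails for $\om_1$ (which is why $H^\infty_1$ must be treated separately, via a privileged norm, in Proposition~\ref{thm:prop1}), and (iv): for the latter one should estimate the $\|\cdot\|_*$-quantity of $h$ near a zero $a$ of $I_h$, where $h(a)=0$ while a Schwarz--Pick inequality forces $|h'(a)|$ to be at least a constant times $\|1/h\|_\infty^{-1}(1-|a|)^{-1}$, so that the oscillation of $h$ over a disk of radius comparable to $1-|a|$ about $a$ is at least a constant times $\|1/h\|_\infty^{-1}$; dividing by $\om(1-|a|)$ gives $\|1/h\|_\infty^{-1}$ times the contribution of that zero to $\|I_h\|_{\La_\om}$, which is exactly the inequality sought.
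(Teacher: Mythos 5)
Your overall architecture is the right one and matches the paper's: reduce \eqref{eqn:lipreg} to a general theorem valid for any space carrying a suitable norm, and feed into it the divisibility of $W\mathcal B^n$ (equivalently $B_W\mathcal B^n$) by $\mathbf B$. Your proof of that divisibility is essentially identical to the paper's and is correct. The gap lies in the part you defer and in the sketch you do give. Of your four axioms, (ii) and (iv) are where all the analytic content sits, and neither is established. Property (ii), the division-invariance $\|h/I\|_*\le\|h\|_*$, is the f-property of $\La_\om$; for the raw Lipschitz norm this is a nontrivial theorem (and only with a constant), and you treat it as free. More seriously, your sketch of (iv) does not work: at a zero $a$ of $I_h$ there is no lower bound $|h'(a)|\gtrsim\|1/h\|_\infty^{-1}(1-|a|)^{-1}$ -- the zero may be multiple, or the Blaschke factor $I_h/b_a$ may be tiny at $a$, killing $h'(a)$. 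And even granting a definite oscillation of $h$ near each zero, the quantity you call \lq\lq the contribution of that zero to $\|I_h\|_{\La_\om}$'' is not one that sums or sups to the norm: $\|B\|_{\La_\om}$ for a Blaschke product is governed by $\sup_{z\in\D}(1-|B(z)|)/\om(1-|z|)$, which sees the cumulative effect of clustered zeros (e.g.\ $B=b_{1/2}^N$ has $\|B\|_{\La^1}\asymp N$ while each zero individually \lq\lq contributes'' $O(1)$). So (iv) cannot be proved zero by zero.

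What actually closes these gaps in the paper is the choice of a specific Garsia-type norm on $A_\om$, namely (from \cite{DActa}, quoted in Section 2)
$$\mathcal M_\om(f)=\|\,|f|\,\|_{\La_\om(\T)}+\sup_{z\in\D}\f{(\P|f|)(z)-|f(z)|}{\om(1-|z|)},$$
which is equivalent to $\|\cdot\|_{\La_\om}$ precisely when $\om$ is regular (this is your (i), and it is the only place regularity enters). For this norm, writing $f=h\th$ with $\th$ inner, the identity $\Psi(|f|,z)-|f(z)|=\{\Psi(|h|,z)-|h(z)|\}+|h(z)|(1-|\th(z)|)$ with both brackets nonnegative yields \emph{both} your (ii) (inequality \eqref{eqn:divproppp}, with constant $1$) and your (iv): $\mathcal M_\om(I_h)=\sup_z(1-|I_h(z)|)/\om(1-|z|)\le\|1/h\|_\infty\,\mathcal M_\om(h)$, since $|O_h|\ge\|1/h\|_\infty^{-1}$ on $\D$. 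The paper then runs the chain on $W\mathcal B^n=F\mathbf B$ directly (splitting off only the inner factor of $F$, not the full Blaschke part of $W$), obtaining $\|1/W\|_\infty^{-1}\mathcal N(\mathbf B)\le\mathcal N(W\mathcal B^n)\le\mathcal N(W)+n\|W\|_\infty\mathcal N(\mathcal B)$; your variant through $B_W$ would also work once (ii) and (iv) are in place, but as written the proposal replaces the one genuinely hard step by an argument that fails.
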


\par Both propositions will follow as special cases from our main result, Theorem \ref{thm:mainresulttt} 
in Section 3 below. Indeed, the hypothesis of that theorem (concerning the existence of a \lq\lq Garsia-type 
norm" on the space in question) is fulfilled for our Lipschitz spaces $A_\om$ and $H^\infty_1$; this is 
explained in Section 2. 

\par We conclude this introduction with an example showing that the $H^\infty_1$ result 
(Proposition \ref{thm:prop1}) is both sharp and best possible, at least within the Lipschitz scale, 
as we said before. Consider the functions $f_0(z)=1$ 
and $f_j(z)=\eps z^j/j!$ for $j=1,\dots,n$, with a suitable $\eps>0$; then 
define $f_{n+1}$ by \eqref{eqn:summm}. If $\eps$ is small enough, then $f_{n+1}$ is zero-free on 
$\D$. The Blaschke products that arise are $B_0(z)=B_{n+1}(z)=1$ and $B_j(z)=z^j$ for $1\le j\le n$. 
We have, therefore, $\mathbf B(z)=z^n$ and $\mathcal B(z)=z$, whence $\|\mathbf B'\|_\infty=n$ and 
$\|\mathcal B'\|_\infty=1$. Also, the Wronskian matrix being upper triangular, one easily 
finds that $W=\eps^n(=\const)$; this yields $\|W'\|_\infty=0$ and $\|W\|_\infty\|1/W\|_\infty=1$. 
Consequently, equality holds in \eqref{eqn:lipone} with $c=1$. 

\par Now let $1<\al<2$ and consider the (higher order) Lipschitz space $A^\al:=\{f\in H^\infty:\,
f'\in A^{\al-1}\}$, normed in a natural way. This time, our current $f_j$'s 
provide a counterexample to the $abc$-type estimate \eqref{eqn:genver} 
with $X=A^\al$. Indeed, the left-hand side in \eqref{eqn:genver} is now a constant 
times $\|z^n\|_{A^\al}$, which is comparable to $n^\al$. As to the right-hand side, it is still 
$O(n)$, so the inequality breaks down. 

\section{Preliminaries on Garsia-type norms} 

The classical {\it Garsia norm} on the space $\text{\rm BMOA}:=\text{\rm BMO}\cap H^2$ is 
given by 
\begin{equation}\label{eqn:garsia}
\|f\|_G:=\sup_{z\in\D}\left\{\mathcal P(|f|^2)(z)-|f(z)|^2\right\}^{1/2}.
\end{equation} 
Here, the notation $\mathcal P\ph$ stands for the Poisson integral of a function $\ph\in L^1(\T)$, so that 
$$\mathcal P\ph(z):=\f1{2\pi}\int_\T \ph(\ze)\f{1-|z|^2}{|\ze-z|^2}|d\ze|,\qquad z\in\D.$$ 
The definition \eqref{eqn:garsia} actually makes sense for all $f\in H^2$, and it is well known that 
$\text{\rm BMOA}=\{f\in H^2:\|f\|_G<\infty\}$. Moreover, the Garsia norm $\|\cdot\|_G$ is equivalent 
to the original $\text{\rm BMO}$-norm $\|\cdot\|_*$ defined in terms of mean oscillation; see 
\cite[Chapter VI]{G} or \cite[Chapter X]{K}. 
\par Recently, we introduced in \cite{DAdv} a more general concept of {\it Garsia-type norm} 
(GTN) as follows. Suppose $X$ is a Banach space of analytic functions on $\D$ 
such that $X\sb H^p$ for some $p>0$. Write $|H^p|$ for the set of all nonnegative 
functions $g\in L^p(\T)$ satisfying either $\log g\in L^1(\T)$ or $g=0$ a.e.; these are 
precisely (the boundary values of) the moduli of $H^p$-functions. Further, assume that 
there exist a mapping $\Psi:|H^p|\times\D\to[0,+\infty]$ and a function 
$k:\D\to\R_+$ with the following properties: 

\smallskip$\bullet$ $\Psi(\la g,z)=\la^p\Psi(g,z)$ whenever $\la\in\R_+$, $g\in|H^p|$ 
and $z\in\D$, 

\smallskip$\bullet$ $\Psi(|f|,z)\geq|f(z)|^p$ for all $f\in H^p$ and $z\in\D$, 

\smallskip$\bullet$ the quantity 
$$\mathcal N(f)=\mathcal N_{p,\Psi,k}(f):=\sup_{z\in\D}\f{\{\Psi(|f|,z)
-|f(z)|^p\}^{1/p}}{k(z)},\qquad f\in H^p,$$ 
is comparable to $\|f\|_X$ with constants not depending on $f$ (it is 
understood that $\|f\|_X=\infty$ for $f\in H^p\setminus X$). 

\smallskip Then we say that $\mathcal N(\cdot)$ {\it is a GTN on $X$}, and accordingly, that 
{\it $X$ admits a GTN}. In fact, the $X$'s we have in mind will always be analytic subspaces 
of certain smoothness classes on $\T$, and the constants will have zero $X$-norm. Therefore, 
we should also have $\mathcal N(\mathbf1)=0$ (where $\mathbf1$ is the constant function $1$), 
and this reduces to saying that 
\begin{equation}\label{eqn:psioneone}
\Psi(\mathbf1,z)=1,\qquad z\in\D, 
\end{equation}
an assumption to be imposed hereafter. 

\par The reason why GTN's are useful is that, once available, such a norm makes it easy 
to separate the contributions of the two factors in the canonical (inner-outer) 
factorization of functions in $X$. Indeed, let $f=h\th$, where 
$h\in H^p$ and $\th$ is an inner function. Since $|f|=|h|$ a.e. 
on $\T$, we have 
$$\Psi(|f|,z)-|f(z)|^p=\left\{\Psi(|h|,z)-|h(z)|^p\right\}+
\{|h(z)|^p(1-|\th(z)|^p)\}$$ 
for $z\in\D$. Both terms in curly brackets on the right are nonnegative, so 
$$\{\Psi(|f|,z)-|f(z)|^p\}^{1/p}\asymp 
\left\{\Psi(|h|,z)-|h(z)|^p\right\}^{1/p}
+|h(z)|(1-|\th(z)|^p)^{1/p}$$ 
(meaning that the ratio of the two sides lies between two positive constants that depend 
only on $p$), and hence 
$$\mathcal N_{p,\Psi,k}(f)\asymp\mathcal N_{p,\Psi,k}(h)+\mathcal S_{p,k}(h,\th),$$
where 
$$\mathcal S_{p,k}(h,\th):=\sup_{z\in\D}\f{|h(z)|(1-|\th(z)|^p)^{1/p}}{k(z)}.$$
More precisely, 
\begin{equation}\label{eqn:asympforbis}
\max\left\{\mathcal N_{p,\Psi,k}(h),\,\mathcal S_{p,k}(h,\th)\right\}
\le\mathcal N_{p,\Psi,k}(f)\le c_p\left\{\mathcal N_{p,\Psi,k}(h)
+\mathcal S_{p,k}(h,\th)\right\}
\end{equation}
for a suitable constant $c_p>0$; we can take $c_p=1$ if $p\ge1$. 
\par We see, in particular, that 
\begin{equation}\label{eqn:divproppp}
\mathcal N_{p,\Psi,k}(h\th)\ge\mathcal N_{p,\Psi,k}(h) 
\end{equation}
for $h\in H^p$ and $\th$ inner, which means that division by inner factors preserves 
membership in $X$. (Equivalently, $X$ enjoys the so-called {\it f-property}.) 
On the other hand, given $h\in X$ and an inner function $\th$, we have 
$h\th\in X$ if and only if $\mathcal S_{p,k}(h,\th)<\infty$. When $h=\mathbf1$, this 
gives a criterion for an inner function $\th$ to be in $X$; moreover, \eqref{eqn:psioneone} 
implies that 
$$\mathcal N_{p,\Psi,k}(\th)=\mathcal S_{p,k}(\mathbf1,\th).$$ 

\par The parameters corresponding to the Garsia norm $\|\cdot\|_G$ are obviously 
$p=2$, $\Psi(g,z)=\P(g^2)(z)$ and $k(z)\equiv1$. Keeping the same $p$ and $\Psi$ while 
putting $k(z)=\om(1-|z|)$, with a majorant $\om$, gives rise to the GTN 
$$\|f\|_{G,\om}:=\sup_{z\in\D}\f{\left\{\mathcal P(|f|^2)(z)-|f(z)|^2\right\}^{1/2}}{\om(1-|z|)}$$ 
on the space $\text{\rm BMOA}_\om:=\{f\in H^2:\|f\|_{G,\om}<\infty\}$. If $\om(t)$ tends 
to $0$ slowly enough as $t\to0^+$ (e.g., if $\om(t)=(\log\f et)^{-\eps}$ with a suitably small $\eps>0$), 
then $\text{\rm BMOA}_\om$ will retain many features of $\text{\rm BMOA}$; in particular, it will 
contain unbounded and discontinuous functions. For faster $\om$'s (such as $\om(t)=t^\al$ with 
$0<\al<\f12$), it becomes a Lipschitz space. In fact, it was proved in \cite{DActa} that 
if $\om$ and $\om^2$ are both regular majorants, then $\text{\rm BMOA}_\om$ coincides with $A_\om$, 
the norm $\|\cdot\|_{G,\om}$ being equivalent to $\|\cdot\|_{\La_\om}$. The assumption on $\om^2$ 
cannot be dropped here: just note that the identity function $f_0(z):=z$ has $\|f_0\|_{G,\om}=\infty$ 
whenever $\om(t)/\sqrt t\to0$ as $t\to0^+$. 

\par Furthermore, assuming that $\om$ alone is a regular majorant, we proved in \cite{DActa} that 
the functional 
$$\mathcal M_\om(f):=\|\,|f|\,\|_{\La_\om(\T)}+\sup_{z\in\D}\f{(\P|f|)(z)
-|f(z)|}{\om(1-|z|)}$$ 
provides an equivalent norm on $A_\om$. In particular, this is the case for $\om(t)=t^\al$ 
with $0<\al<1$ (but not with $\al=1$). Clearly, we have 
$\mathcal M_\om(f)=\mathcal N_{1,\Psi,k}(f)$ with 
$$\Psi(g,z)=\|g\|_{\La_\om(\T)}\cdot\om(1-|z|)+(\P g)(z)$$
and $k(z)=\om(1-|z|)$, so $\mathcal M_\om$ is a GTN on $A_\om$. 

\par Finally, the extreme case $\om(t)=t$ was studied in \cite{DAdv}. There we 
showed that a GTN can be defined on $H^\infty_1$ by taking $p=1$, $k(z)=1-|z|$ and 
$$\Psi(g,z)=\|g\|_{\La^1(\T)}\cdot(1-|z|)+\left|g\left(\f z{|z|}\right)
-\exp\{(\P\log g)(z)\}\right|+\exp\{(\P\log g)(z)\}$$ 
(with the understanding that $\Psi(g,z)=\infty$ if $g\notin\La^1(\T)$, 
and $z/|z|=1$ if $z=0$). The norm that arises is thus 
$\tilde{\mathcal N}_1(\cdot):=\mathcal N_{1,\Psi,k}(\cdot)$, or equivalently, 
$$\tilde{\mathcal N}_1(f)=\|\,|f|\,\|_{\La^1(\T)}
+\sup_{z\in\D}\f{\left|\,|f(z/|z|)|-|\mathcal O_{|f|}(z)|\,\right|
+|\mathcal O_{|f|}(z)|-|f(z)|}{1-|z|},$$ 
where $\mathcal O_{|f|}$ is the outer function with modulus $|f|$ on $\T$. 

\par At the same time, it turns out \cite{DAdv} that the higher order $A^\al$-spaces (i.e., those 
with $\al>1$) or, more generally, the classes $A^n_\om:=\left\{f:f^{(n)}\in A_\om\right\}$ 
with $n\ge1$ admit no GTN at all. This accounts for the special role of the endpoint space $H^\infty_1$ 
in our story. 

\section{Main result} 

\begin{thm}\label{thm:mainresulttt} Let $X$ be a space that admits a Garsia-type norm 
$\mathcal N=\mathcal N_{p,\Psi,k}$, where $p\ge1$ and $\Psi$ satisfies \eqref{eqn:psioneone}. 
Assume that the functions $f_j$ ($j=0,1,\dots,n+1$), related by \eqref{eqn:summm}, are analytic 
on $\D\cup\T$ and that the Wronskian \eqref{eqn:wronskiannn} vanishes nowhere on $\T$. 
Write 
\begin{equation}\label{eqn:lcmradbisss}
\mathbf B:=\text{\rm{LCM}}(B_0,\dots,B_{n+1})\quad\text{and}\quad
\mathcal B:=\text{\rm{rad}}(B_0B_1\dots B_{n+1}), 
\end{equation}
where $B_j$ is the Blaschke product associated with $f_j$. Then 
\begin{equation}\label{eqn:estgar}
\mathcal N(\mathbf B)\le\ga+\mu n^{1/p}\mathcal N(\mathcal B),
\end{equation}
where 
$$\ga=\ga(W):=\mathcal N(W)\|1/W\|_\infty
\quad\text{and}\quad\mu=\mu(W):=\|W\|_\infty\|1/W\|_\infty.$$ 
Also, for some constant $c=c(X,p,\Psi,k)>0$, we have 
\begin{equation}\label{eqn:estbisss}
c\|\mathbf B\|_X\le\kappa_X+\mu n^{1/p}\|\mathcal B\|_X,
\end{equation}
with 
$$\kappa_X=\kappa_X(W):=\|W\|_X\|1/W\|_\infty$$ 
and $\mu$ as above. 
\end{thm}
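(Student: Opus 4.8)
The plan is to extend the argument that produces Theorem C, replacing the quantity $N_\D(\cdot)=\|(\cdot)'\|_1$ by the abstract Garsia-type norm $\mathcal N=\mathcal N_{p,\Psi,k}$. First I would recall the algebraic core of the Mason--Stothers mechanism as used in \cite{DCRM, DLoc}: if $f_j=h_j B_j$ is the inner-outer-type splitting with $B_j$ the Blaschke product of the zeros of $f_j$, then the Wronskian $W=W(f_0,\dots,f_n)$ factors as $W=\mathbf B\cdot G$, where $G$ is an analytic function on $\D\cup\T$ whose zeros in $\D$ are controlled by the radical $\mathcal B$; more precisely, one shows $\mathbf B$ divides $W$ (each $B_j$ contributes its zeros to the Wronskian, and the LCM is exactly what survives) and that $\mathcal B^n / \mathbf B$ times a bounded zero-free-near-$\T$ factor equals $G/W$ up to units, so that $\mathbf B \mid W$ with cofactor having zeros governed by $\mathcal B$. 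The key identity to isolate is therefore of the shape
\begin{equation}\label{eqn:planfactor}
\mathbf B = W\cdot\frac{1}{G},\qquad G = \frac{W}{\mathbf B},
\end{equation}
together with the divisibility fact that $G$ is (the restriction to $\D\cup\T$ of) an analytic function whose Blaschke product $B_G$ divides $\mathcal B^{\,n}$, i.e. $\mathcal B^{\,n} = B_G\cdot \Theta$ for some inner (in fact finite Blaschke) $\Theta$.

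Next I would feed this into the GTN formalism. Since $p\ge1$, the constant $c_p$ in \eqref{eqn:asympforbis} is $1$, so $\mathcal N$ is genuinely subadditive and submultiplicative-in-the-right-way for products with inner/outer pieces. Writing $W = \mathbf B\, G$ and using that $|W| = |\mathbf B\, G| = |G|$ on $\T$ while $|1/W|\le \|1/W\|_\infty$, the outer part of $W$ and of $G$ agree up to the bounded factor $1/\mathbf B$ has modulus one on $\T$; hence $\mathcal N(\mathbf B)$ can be estimated by $\mathcal N(W)\cdot\|1/W\|_\infty$ plus a term coming from the Blaschke part of $G$. Concretely: $\mathbf B = W/G$, and the outer factor of $\mathbf B$ is trivial (it is inner), so by \eqref{eqn:divproppp} applied in the form $\mathcal N(\mathbf B) = \mathcal N_{p,\Psi,k}(\mathbf B)\le$ the $\mathcal S$-type quantity built from $W$ divided by $|G|\ge |W|/\|W\|_\infty\cdots$. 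The cleanest route: bound $\Psi(|\mathbf B|,z)-|\mathbf B(z)|^p \le \|1/W\|_\infty^{\,p}\bigl(\Psi(|W|,z)-|W(z)|^p\bigr) + |W(z)|^p\|1/W\|_\infty^{\,p}\bigl(\text{defect of }G\bigr)$, using $|\mathbf B| = |W|/|G|$ and $|G|^{-1}\le\|1/W\|_\infty\|W\|_\infty\cdot$(defect of $B_G$). Taking $\sup$ over $z$ and using $\Psi(\mathbf 1,z)=1$ gives $\mathcal N(\mathbf B)\le \mathcal N(W)\|1/W\|_\infty + \mu\cdot\mathcal S_{p,k}(\mathbf 1, B_G)= \ga + \mu\,\mathcal N(B_G)$. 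Finally $\mathcal N(B_G)$ is controlled by $\mathcal N(\mathcal B^{\,n})^{1/?}$; here the factor $n^{1/p}$ emerges because $\mathcal B^{\,n}$ has the defect $1-|\mathcal B^{\,n}|^p \le 1-|\mathcal B|^{np}\le n\bigl(1-|\mathcal B|^p\bigr)$ pointwise (using $1-x^{n}\le n(1-x)$ for $x\in[0,1]$ with $x=|\mathcal B|^p$), so $\mathcal S_{p,k}(\mathbf 1,\mathcal B^{\,n})\le n^{1/p}\mathcal S_{p,k}(\mathbf 1,\mathcal B)$, hence $\mathcal N(\mathcal B^{\,n})\le n^{1/p}\mathcal N(\mathcal B)$, and $B_G\mid\mathcal B^{\,n}$ gives $\mathcal N(B_G)\le\mathcal N(\mathcal B^{\,n})$ by \eqref{eqn:divproppp}. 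Combining yields \eqref{eqn:estgar}.

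For \eqref{eqn:estbisss} I would simply invoke the comparability of $\mathcal N$ with $\|\cdot\|_X$: there are constants $0<c_1\le c_2$ with $c_1\|g\|_X\le\mathcal N(g)\le c_2\|g\|_X$ for all $g\in H^p$. Applying the lower bound to $\mathbf B$, the upper bound to $W$ inside $\ga$ and to $\mathcal B$, \eqref{eqn:estgar} becomes $c_1\|\mathbf B\|_X\le c_2\|W\|_X\|1/W\|_\infty + \mu n^{1/p} c_2\|\mathcal B\|_X$, i.e. \eqref{eqn:estbisss} with $c = c_1/c_2$ (which depends only on $X,p,\Psi,k$, not on $n$ or the $f_j$). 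The main obstacle I anticipate is the algebraic/analytic bookkeeping behind \eqref{eqn:planfactor}: showing rigorously that $\mathbf B$ divides the Wronskian $W$ as an inner factor and that the complementary factor $G=W/\mathbf B$, though no longer a polynomial, is still analytic on $\D\cup\T$ with $B_G$ dividing $\mathcal B^{\,n}$ (rather than, say, $\mathcal B^{\,n-1}$ or some variant), since the powers have to come out exactly right for the factor $n^{1/p}$ to be correct and for $\mu$ rather than $\mu^{n}$ to appear. This is precisely the point where I would lean on the detailed Wronskian computation of \cite{DCRM, DLoc}, adapting it verbatim — the novelty here is entirely in replacing $\|\cdot'\|_1$ by $\mathcal N$ in the final estimation step, not in the zero-counting algebra, which is identical to the $H^1_1$ case already treated there.
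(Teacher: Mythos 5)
Your overall strategy --- reduce everything to a single divisibility statement relating $W$, $\mathbf B$ and $\mathcal B$, then run the GTN machinery (inequality \eqref{eqn:asympforbis} with $c_p=1$, the division property \eqref{eqn:divproppp}, the elementary bound $1-t^n\le n(1-t)$ to extract the factor $n^{1/p}$, and finally the equivalence of $\mathcal N$ with $\|\cdot\|_X$ to pass from \eqref{eqn:estgar} to \eqref{eqn:estbisss}) --- is the right one, and those estimation steps are essentially correct. But the divisibility statement you put at the core, namely $W=\mathbf B\cdot G$ with $G$ analytic on $\D\cup\T$ and $B_G\mid\mathcal B^{\,n}$, is false. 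The example in the Introduction already refutes it: for $f_0=1$ and $f_j(z)=\eps z^j/j!$ one has $W\equiv\eps^n$, a nonzero constant, while $\mathbf B(z)=z^n$, so $\mathbf B$ cannot divide $W$. The underlying reason is that a zero of $f_j$ of multiplicity $k$ forces a zero of $W$ of multiplicity only $\ge k-n$ (each differentiation in the $j$-th column of the Wronskian eats one order of vanishing), and guarantees nothing at all when $k\le n$. The correct identity is $W\mathcal B^{\,n}=F\,\mathbf B$ with $F$ analytic on $\D\cup\T$: the factor $\mathcal B^{\,n}$ supplies exactly the $n$ orders of vanishing that the Wronskian loses.

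This is not a cosmetic difference. In your version the term $\mu\,n^{1/p}\mathcal N(\mathcal B)$ would arise from the cofactor $G$ of $W$, whereas in the actual argument it comes from estimating the \emph{numerator} $W\mathcal B^{\,n}$ from above via \eqref{eqn:asympforbis}, while $\mathcal N(\mathbf B)$ is extracted from below through $\mathcal N(W\mathcal B^{\,n})\ge\mathcal N(\mathcal O_{|W|}\mathbf B)\ge\|1/W\|_\infty^{-1}\mathcal N(\mathbf B)$; here one uses that $|F|=|W|$ on $\T$ (so the outer factor of $F$ is $\mathcal O_{|W|}$), the division property \eqref{eqn:divproppp} to discard the inner factor of $F$, and $\inf_{\D}|\mathcal O_{|W|}|=\|1/W\|_\infty^{-1}$. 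You do flag the divisibility as the step you would import from \cite{DCRM,DLoc}, and had you imported the correct statement the remainder of your argument would go through much as in the paper; but as written the proof rests on a factorization that does not hold.
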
 

\begin{proof} Of course, it suffices to prove \eqref{eqn:estgar}. This done, \eqref{eqn:estbisss} 
will follow readily, the norms $\mathcal N(\cdot)$ and $\|\cdot\|_X$ being equivalent. 
\par As in \cite{DLoc}, we begin by verifying that the ratio $W\mathcal B^n/\mathbf B$ 
is analytic on $\D$ (and in fact on $\D\cup\T$). We need not worry 
about the zeros of $\mathbf B$ whose multiplicity is at most $n$, 
since these are obviously killed by the numerator, $W\mathcal B^n$. 
So let $z_0\in\D$ be a zero of multiplicity $k$, $k>n$, for $\mathbf B$. 
Then there is a $j\in\{0,\dots,n+1\}$ such that $B_j$ vanishes to order $k$ at $z_0$, 
and so does $f_j$. Expanding the determinant \eqref{eqn:wronskiannn} along the column which  
contains $f_j,\dots,f_j^{(n)}$, while noting that $f_j^{(l)}$ vanishes to order $k-l$ at $z_0$, 
we see that $W$ has a zero of multiplicity $\ge k-n$ at $z_0$. (In case $j=n+1$, 
one should observe that, by \eqref{eqn:summm}, the determinant remains unchanged upon 
replacing any one of its columns by $(f_{n+1},\dots,f^{(n)}_{n+1})^T$.) And since 
$\mathcal B$ has a zero at $z_0$, it follows that $W\mathcal B^n$ vanishes at least to 
order $k$ at that point. 
\par We conclude that $W\mathcal B^n$ is indeed divisible by $\mathbf B$. In other words, we have 
\begin{equation}\label{eqn:burundukkk}
W\mathcal B^n=F\mathbf B, 
\end{equation}
where $F$ is analytic on $\D$. In addition, this $F$ is analytic across $\T$, because the other 
factors in \eqref{eqn:burundukkk} have this property and because 
\begin{equation}\label{eqn:modulusone}
|\mathbf B|=|\mathcal B|=1\quad\text{\rm on }\T. 
\end{equation}
Factoring $F$ canonically (see \cite[Chapter II]{G}), we 
write $F=\mathcal I\mathcal O$, where $\mathcal I$ is inner and $\mathcal O$ is outer. 
Furthermore, a glance at \eqref{eqn:burundukkk} and \eqref{eqn:modulusone} reveals that $|F|=|W|$ 
on $\T$, so the outer factor $\mathcal O=\mathcal O_{|F|}$ coincides with $\mathcal O_{|W|}$. An 
application of \eqref{eqn:divproppp} with $h=\mathcal O_{|W|}\mathbf B$ and $\th=\mathcal I$ now 
shows that 
$$\mathcal N(W\mathcal B^n)
=\mathcal N(F\mathbf B)
=\mathcal N\left(\mathcal O_{|W|}\mathcal I\mathbf B\right)
\ge\mathcal N\left(\mathcal O_{|W|}\mathbf B\right).$$
This and \eqref{eqn:asympforbis} together imply 
\begin{equation}\label{eqn:wrobelowww}
\begin{aligned}
\mathcal N(W\mathcal B^n)
&\ge\mathcal N\left(\mathcal O_{|W|}\mathbf B\right)\\
&\ge\sup_{z\in\D}\f{\left|\mathcal O_{|W|}(z)\right|\cdot\{1-|\mathbf B(z)|^p\}^{1/p}}{k(z)}\\
&\ge\left(\inf_{z\in\D}\left|\mathcal O_{|W|}(z)\right|\right)\cdot
\sup_{z\in\D}\f{\{1-|\mathbf B(z)|^p\}^{1/p}}{k(z)}\\
&=\left(\inf_{z\in\D}\left|\mathcal O_{|W|}(z)\right|\right)\cdot\mathcal N(\mathbf B).
\end{aligned}
\end{equation}
We further observe that 
$$1/\mathcal O_{|W|}=\mathcal O_{1/|W|}\in H^\infty$$ 
(because $1/W\in L^\infty(\T)$) and 
$$\sup_{z\in\D}\left|\mathcal O_{|W|}(z)\right|^{-1}
=\left\|1/\mathcal O_{|W|}\right\|_\infty=\|1/W\|_\infty,$$ 
whence 
$$\inf_{z\in\D}\left|\mathcal O_{|W|}(z)\right|=\|1/W\|^{-1}_\infty.$$
Substituting this into \eqref{eqn:wrobelowww}, we obtain 
\begin{equation}\label{eqn:wrobelll}
\mathcal N(W\mathcal B^n)\ge\|1/W\|^{-1}_\infty\cdot\mathcal N(\mathbf B).
\end{equation}

\par Another application of \eqref{eqn:asympforbis} (with $c_p=1$), coupled with the elementary 
inequality 
$$1-t^n\le n(1-t)\quad\text{\rm for}\quad t\in[0,1],$$ 
yields 

\begin{equation*}
\begin{aligned} 
\mathcal N(W\mathcal B^n)&\le\mathcal N(W)+
\sup_{z\in\D}\f{|W(z)|\cdot\{1-|\mathcal B(z)|^{np}\}^{1/p}}{k(z)}\\
&\le\mathcal N(W)+\|W\|_\infty\cdot n^{1/p}\cdot\sup_{z\in\D}\f{\{1-|\mathcal B(z)|^p\}^{1/p}}{k(z)}\\
&=\mathcal N(W)+\|W\|_\infty\cdot n^{1/p}\cdot\mathcal N(\mathcal B).
\end{aligned}
\end{equation*} 
Thus, 
\begin{equation}\label{eqn:wroaboveee}
\mathcal N(W\mathcal B^n)\le\mathcal N(W)+n^{1/p}\|W\|_\infty\mathcal N(\mathcal B).
\end{equation}

\par Finally, we combine \eqref{eqn:wrobelll} and \eqref{eqn:wroaboveee} to get  
$$\mathcal N(\mathbf B)\le\|1/W\|_\infty\cdot\left\{\mathcal N(W)
+n^{1/p}\|W\|_\infty\mathcal N(\mathcal B)\right\},$$ 
which is the required estimate \eqref{eqn:estgar}. 
\end{proof}

\medskip

\end{document}